\def\Pf{\operatorname{Pf}}
\def\topdeg{\operatorname{topdeg}}
\def\reg{\operatorname{reg}}
\def\grade{\operatorname{grade}}
\def\Sym{\operatorname{Sym}}
\define\Hom{\operatorname{Hom}}
\def\Supp{\operatorname{Supp}}
\define\coker{\operatorname{coker}}
\def\HH{\operatorname{H}}
\def\depth{\operatorname{depth}}
\def\indeg{\operatorname{indeg}}
\def\Ext{\operatorname{Ext}}
\def\socle{\operatorname{socle}}
\def\maxgendeg{b_0}
\def\mgd{b_0}
\def\p{\mathfrak p}
\def\m{\mathfrak m}
\def\htt{\operatorname{ht}}
\def\lto{\longrightarrow}
\newtheorem{theorem}{Theorem}[section]
\newtheorem{lemma}[theorem]{Lemma}
\newtheorem{corollary}[theorem]{Corollary}
\newtheorem{proposition}[theorem]{Proposition}
\newtheorem{observation}[theorem]{Observation}
\newtheorem*{Corollary5.7}{Corollary \ref{SJD-cor3}}
\newtheorem*{Theorem10.1}{Theorem \ref{Main10}}
\theoremstyle{definition}
\newtheorem{def&dis}[theorem]{Definition and Discussion}
\newtheorem{remark}[theorem]{Remark}
\newtheorem{chunk}[theorem]{}
\newtheorem{example}[theorem]{Example}
\newtheorem{setup}[theorem]{Setup}
\newtheorem*{Reminder}{Reminder}
\numberwithin{equation}{theorem}
\newenvironment{outline of proof}{\paragraph{\em Outline of proof.}}{\hfill$\qed$}
\begin{document}

\baselineskip=16pt

 \title[Degree bounds for  local cohomology]{\bf Degree bounds for  local cohomology}
\date\today

\author[Andrew R. Kustin, Claudia Polini, and Bernd Ulrich]
{Andrew R. Kustin, Claudia Polini, and Bernd Ulrich}

\thanks{AMS 2010 {\em Mathematics Subject Classification}.
Primary 13D45, 13C40, 14A10, 13D02, 13A30.
}

\thanks{The first author was partially supported by the Simons Foundation.
The second and third authors were partially supported by the NSF}

\thanks{Keywords: $a$-invariant,
blowup algebras, Castelnuovo-Mumford  regularity, 
 hyperplane sections, 
Gorenstein ideal, 
local cohomology, local duality, monomial curves, postulation number}

\address{Department of Mathematics, University of South Carolina,
Columbia, SC 29208} \email{kustin@math.sc.edu}

\address{Department of Mathematics, 
University of Notre Dame
Notre Dame, IN 46556} \email{cpolini@nd.edu}

\address{Department of Mathematics,
Purdue University,
West Lafayette, IN 47907}\email{bulrich@purdue.edu}

 \begin{abstract}
 
 It has long been known how to read information about the socle degrees of the local cohomology $\HH_\mathfrak m^0(M)$ of a graded $R$-module from the  twists in position $d=\dim R$, in a resolution of  $M$ by free $R$-modules.  It has also long been known how to use local cohomology to read valuable information from complexes which approximate resolutions in the sense that they have positive homology of small Krull dimension. 
The 
present paper reads information about the maximal generator degree (rather than the socle degree) of $\HH_\mathfrak m^0(M)$ from the 
 twists in position $d-1$ (rather than position $d$) in an approximate resolution of $M$. 

We apply the local cohomology results to draw conclusions about  the maximum generator degree of the second symbolic power of the prime ideal defining a monomial curve and the second symbolic power of the ideal defining a finite set of points in projective space. There is also an application to hyperplane sections of subschemes of projective space and  to partial Castelnuovo-Mumford  regularity.
 Perhaps, the most important application 
is to the study of blow-up algebras and their defining equations. 
The techniques of the present paper are the main tool used in \cite{KPU-BA} to 
bound the degrees of these equations 
and thus to identify them  in some cases.

\end{abstract}
 
\maketitle


\section{Introduction}\label{Intro}

\bigskip
For the time being, let $R$ be a non-negatively graded polynomial ring in $d$ variables  over a field and $M$ be  a finitely generated graded  $R$-module of depth zero. It is well known how to \begin{equation}\label{*}\begin{array}{l}\text{read the socle degrees of $M$ from the twists at the end of}\\\text{a minimal homogeneous finite free resolution of $M$.}\end{array}\end{equation}
 Of course, the socle of $M$ is the socle of the local cohomology module $\HH^0_\mathfrak m(M)$, where $\mathfrak m$ is the maximal homogeneous ideal of $R$.  In this paper we find bounds
on  the degrees of interesting elements of $\HH^i_\mathfrak m(M)$ in terms of information about the ring $R$ and information that can be read from a homogeneous complex of finitely generated  $R$-modules  $C_{\bullet}: \cdots \to C_2\to C_1\to C_0\to 0$ with $\HH_0(C_{\bullet})=M$.
The ring $R$ need not be a polynomial ring, the complex $C_\bullet$ need not be finite, need not be acyclic, and need not consist of free modules, and the parameter $i$ need not be zero. Instead, we impose hypotheses on the Krull dimension of $\HH_j(C_\bullet)$ and the depth of $C_j$ in order make various local cohomology modules $\HH_\mathfrak m^\ell(\HH_j(C_\bullet))$  and $\HH_\mathfrak m^\ell(C_j)$ vanish.

The crucial technical result is Proposition~\ref{SJD-P}. 
In Theorem~\ref{SJD-cor1}, our main theorem, we bound the maximal generator degree of $\HH^i_\mathfrak m(M)$ in terms of the maximal generator degree of $C_j$ for appropriately related $i$ and $j$. 
In particular, in Corollary~\ref{SJD-cor2}, we bound  the maximal generator degree of $\HH^0_\mathfrak m(M)$ in terms of the maximal generator degree of $C_{d-1}$. The hypotheses of  Corollary~\ref{SJD-cor2} hold if $C_\bullet$ is a free resolution of $M$; consequently, this result is completely analogous to (\ref{*}) where $\max\{r\mid [\HH^0_\mathfrak m(M)]_r\neq 0\}$ is read from the generator degrees of $C_d$. Corollary~\ref{SJD-cor3} is an intriguing generalization of the well-known fact that a maximal Cohen-Macaulay module over a polynomial ring is free. 

Corollary~\ref{SJD-cor2} is precisely the result that we use in \cite{KPU-BA}  to identify the torsion submodule of the symmetric powers $\Sym_\ell(I)$ where $I$ is a grade three Gorenstein ideal in an even-dimensional polynomial ring. 
A more elementary result (Proposition~\ref{localCoh}) may be used to identify the torsion submodule of $\Sym_\ell(I)$ when $I$ is a grade three Gorenstein ideal in an odd-dimensional polynomial ring. 
We view Proposition~\ref{localCoh} as a model for the main results in the present paper.

In Section~\ref{AEC}, using a spectral sequence argument, we relate the cohomology of $\Hom(C_{\bullet},N)$ to $\Ext^\bullet(M,N)$, where $C_\bullet$ is a complex with $\HH_0(C_\bullet)=M$ and $M$ and $N$ are arbitrary modules. In spite of the a priori lack of hypotheses we obtain a significant, multi-faceted, result  which is the basis for Section~\ref{TMAS}; and hence the rest of the paper.

In Section~\ref{dim-one},
we apply the local cohomology techniques of Section~\ref{TMAS} to draw conclusions about geometric situations. 
Corollary~\ref{monomialCurve} shows that if 
$\mathfrak p$  
is the prime ideal which 
defines the monomial curve associated to a
numerical semigroup $H$,
then the maximal generator degree of the second symbolic power of $\mathfrak p$ satisfies
$$\maxgendeg (\mathfrak p^{(2)})\le \sup\{\maxgendeg (\mathfrak p)+\text{ the maximal generator of $H$} +\text{ the Frobenius number of $H$}, 2\mgd (\mathfrak p)\}.$$ Corollary~\ref{typed-by-Claudia} shows that if $I$ is the ideal which defines a finite set of points in projective space, then
$$b_0(I^{(2)})\le b_0(I)+p(P/I) +2,$$  where $p(P/I)$ is the postulation number of the homogeneous coordinate ring of the set of points. 
Corollary~\ref{hyperplane} is about hyperplane sections of subschemes of projective space. Let $V$ be the subscheme of $\mathbb P^{d-1}_k$ defined by the homogeneous ideal $I$ in $R=k[x_1,\dots,x_d]$ and $H$ be a linear subspace of $\mathbb P^{d-1}_k$ defined by general linear forms in $k[x_1,\dots,x_d]$. We produce an upper bound for the maximal generator degree of the saturated ideal defining the subscheme  $V\cap H$ of $H$, in terms of information that can be read from a single shift in the minimal homogeneous resolution of $R/I$. 

\bigskip\noindent{\bf Acknolwedgment.}  We are indebted to the referee of the first version of this paper. Owing to the referee's suggestions, the proof of Lemma~\ref{SPD} was simplified substantially and the assumptions of Theorem~\ref{SJD-cor1} were weakened. 


\section{Conventions,  notation, and preliminary results}\label{ConNotPrel}

\begin{chunk} By a graded ring or module we mean a $\mathbb Z$-graded ring or module, unless otherwise specified. Notice that every ring and module is graded, namely trivially graded. 
\end{chunk}

\begin{chunk} If $M$ and $N$ are graded modules over  a graded  ring $R$, then $N$ is a homogeneous  {\it subquotient} of $M$ if $N$ is isomorphic to a homogeneous submodule of a graded homomorphic image of $M$; that is, if there exists a graded  $R$-module $P$ with 
$$\xymatrix{
&M\ar@{->>}[d]\\
N\ar@{^{(}->}[r]&P.}$$ Of course, $N$ is also a subquotient of $M$ if $N$ is isomorphic to a graded homomorphic image of a homogeneous submodule of $M$. The property of being a subquotient is transitive in the sense that  if $M_1$ is a subquotient of $M_2$ and $M_2$ is a subquotient of $M_3$, then $M_1$ is a subquotient of $M_3$.
\end{chunk}

\begin{chunk} Let $M$ and $N$ be graded modules over a graded ring $R$. By $^*\Hom_R(M,N)$ and $^*\Ext^i_R(M,N)$ we denoted the graded $\Hom$ and $\Ext$ modules. They coincide with the usual $\Hom$ and $\Ext$ modules if $R$ is Noetherian and $M$ is finitely generated or if $R$, $M$, and $N$ are trivially graded.
\end{chunk}

\begin{chunk} Let $M$ and $N$ be graded modules over a graded ring $R$. In order to simplify our formulas we set $\Ext_R^i(M,N)=0$ and   $^*\Ext_R^i(M,N)=0$ for $i< 0$. If $R$ is a non-negatively graded Noetherian ring, $R_0$ is local, and  $\mathfrak m$ is the maximal homogeneous ideal of $R$, we also set $ \HH_{\mathfrak m}^i(M)=0$ for $i< 0$.
\end{chunk}

\begin{chunk}\label{numerical-functions}We collect  names  for some of the invariants associated to a graded module. Let $R$ be a graded  ring and $M$ be a graded $R$-module. Define
\begin{align}
\topdeg M&=\sup\{j \mid M_j\not=0\},
\notag\\
\indeg M&=\inf\{j \mid M_j\not=0\}, 
\notag\\
b_0(M)&\textstyle=\inf \left\{b \, | \,  R\left(\bigoplus_{j\le b}M_j\right)=M \right\}.\notag\\
\intertext{
If $R$ is a non-negatively graded Noetherian ring, $R_0$ is local,  $\mathfrak m$ is the maximal homogeneous ideal of $R$, and $M$ is finitely generated,  then also define}
a_i(M)&=\topdeg \HH_{\mathfrak m}^i(M) \text{ \ and}
\notag\\
b_i(M)&=\topdeg \operatorname{Tor}^R_i(M,R/\mathfrak m).
\notag\end{align}
Observe that both definitions of the maximal generator degree $b_0(M)$ give the same value.  The expressions ``$\topdeg$'', ``$\indeg$'', and ``$b_i$'' are read ``top degree'', ``initial degree'', and 
``maximal $i$-th shift in a minimal homogeneous free resolution'',
respectively.
If $M$ is the zero module, then $$\topdeg(M)=b_0(M)=-\infty\quad\text{and}\quad \indeg M=\infty.$$ In general one has 
$$a_i(M)< \infty \quad\text{and}\quad b_i(M)< \infty\, .$$
\end{chunk}

\smallskip

We often use the  data of \ref{StandardData}. 
\begin{chunk}\label{StandardData}Let $R$ be a non-negatively graded Noetherian ring with $R_0$ local, let $\mathfrak m$ be the maximal homogeneous ideal of $R$, and write $k=R/\m$ for the residue field.\end{chunk}

\begin{chunk}\label{Theta-sub-t}Take  $R,R_0,\mathfrak m$  as described in {\rm\ref{StandardData}}.  For a minimal homogeneous generating set $y_1,\dots,y_n$ of $\mathfrak m$, with $\deg y_1\ge \deg y_2\ge \dots \ge \deg y_n$, let $\Theta_t=\sum_{j=1}^t\deg y_j\,$ if $t\le n$ and $\Theta_t=-\infty\, $ otherwise.\end{chunk}

\begin{chunk}\label{GCM} Take  $R,R_0,\mathfrak m$  as described in {\rm\ref{StandardData}}. Assume further that 
$R_0$ is a 
factor ring of a local Gorenstein ring $T$.
Let $S=T[x_1,\dots,x_n]$ be a graded polynomial ring which maps homogeneously onto $R$. If $g$ is the codimension of $R$ as an $S$-module, 
then the graded canonical module of $R$ is $$\omega_R=\Ext_S^g(R,S)(-\sum\deg x_i).$$
\end{chunk}

\begin{chunk}\label{ainvariant} Take  $R,R_0,\mathfrak m$  as described in {\rm\ref{StandardData}} with $\dim R=d$. Recall the numerical functions of \ref{numerical-functions}. 
 The 
 $a$-invariant of $R$ is 
$$a(R)=a_d(R).$$
Furthermore, if $\omega_R$ is the graded canonical module of $R$ (see \ref{GCM}), then 
$$a(R)=-\indeg \omega_R
.$$
\end{chunk}

\begin{chunk}The graded ring $R=\bigoplus\limits_{i\ge 0}R_i$ is a {\it standard graded $R_0$-algebra} if $R$ is generated as an $R_0$-algebra by $R_1$ and $R_1$ is finitely generated as an $R_0$-module. 
\end{chunk}

\begin{chunk}
\label{aiM} Let  $R$ be a standard graded polynomial ring over a field $k$, $\mathfrak m$ be the maximal homogeneous ideal of $R$, and   
$M$ be a finitely generated graded $R$-module.  Recall the numerical functions of \ref{numerical-functions}. 
The Castelnuovo-Mumford {\it regularity} of  $M$ is
$$\reg M=\sup\{a_i(M)+i\}=\sup\{b_i(M)-i\}.$$
\end{chunk}

\begin{chunk} Let $q$ be an integer and $R$ be a ring. A complex of finitely generated free $R$-modules 
$$ \, \ldots \, \lto C_1\lto C_0\lto 0$$ is called {\it $q$-linear}
if $C_i=R(-q-i)^{\beta_i}$ for some $\beta_i$, for all $i$ with $0\le i$. 
\end{chunk}

\smallskip


\section{First bounds on local cohomology modules}\label{AEC}
The main result of this paper is Theorem~\ref{SJD-cor1}. 
Lemma~\ref{SPD} is the first step in the proof of Theorem~\ref{SJD-cor1}. 
In Lemma~\ref{SPD} we relate the cohomology of $\Hom(C_{\bullet},N)$ to $\Ext^\bullet(M,N)$, where $C_\bullet$ is a complex with $\HH_0(C_\bullet)=M$ and $N$ is an arbitrary module. 

\begin{setup}\label{SU-SPD}Let $R$ be a graded ring, $M$ and $N$ be graded $R$-modules, and $$C_{\bullet}:\quad \dots \longrightarrow C_1\longrightarrow C_0\to 0$$be a homogeneous complex of graded $R$-modules with $\HH_0(C_{\bullet})=M$. Fix an integer $i$. We consider two hypotheses which can be imposed on the above data:
\begin{itemize}
\item The data satisfies $U_{i}$ if $\ ^{*}\Ext^{i-j}(\HH_j(C_\bullet),N)=0\, $   for all integers $j$ with $1\le j\le i$.


\item The data satisfies $V_{i}\, $ if $\ ^{*}\Ext^{i-j}(C_j,N)=0\, $  for all integers $j$  with $0\le j\le i-1$.

\end{itemize}
\end{setup}

\begin{lemma}\label{SPD} In the setup of \ {\rm\ref{SU-SPD}}, the following statements hold.
\begin{enumerate}[\rm(a)]
\item\label{SPD-iso} If the hypotheses $U_{i-1}$, $U_{i}$, $V_{i-1}$, and $V_i$ all are in effect, then there is a natural homogeneous isomorphism $\ \HH^i(^{*}\Hom(C_{\bullet},N))\simeq \,^{*}\Ext^i(M,N)$.
\item\label{SPD-E-onto-H} If the hypotheses $U_{i}$, $V_{i-1}$, and $V_i$  are in effect, then there is a natural homogeneous surjection $$\xymatrix{ ^*\Ext^i(M,N)\ar@{->>}[r]& \HH^i(^{*}\Hom(C_{\bullet},N))}.$$
\item\label{SPD-H-onto-E} If the hypotheses $U_{i-1}$, $U_{i}$, and $V_i$  are in effect, then there is a natural homogeneous surjection $$\xymatrix{\HH^i(^{*}\Hom(C_{\bullet},N))\ar@{->>}[r]&  ^*\Ext^i(M,N)}.$$
\item \label{SPD-H-in-E}If the hypotheses $U_{i-1}$, $U_{i}$, and $V_{i-1}$   are in effect, then there is a natural homogeneous injection $$\xymatrix{
\HH^i(^*\Hom(C_{\bullet},N))
\ \ar@{^{(}->}[r]& ^*\Ext^i(M,N) }.$$
\item\label{SPD-E-in-H} If the hypotheses  $U_{i-1}$, $V_{i-1}$, and $V_i$   are in effect, then there is a natural homogeneous injection $$\xymatrix{
^*\Ext^i(M,N)
\ \ar@{^{(}->}[r]& \HH^i(^*\Hom(C_{\bullet},N)) }.$$
\item\label{SPD-H-sq-of-E} If the hypotheses $U_{i}$ and $V_{i-1}$  are in effect, then $\HH^i(^*\Hom(C_{\bullet},N))$ is a natural  homogeneous subquotient of $\ ^*\Ext^i(M,N)$.

\item\label{SPD-E-sq-of-H} If the hypotheses $U_{i-1}$ and $V_i$   are in effect, then $^*\Ext^i(M,N)$ is a natural  homogeneous subquotient of $\HH^i(^*\Hom(C_{\bullet},N))$.
\end{enumerate}
\end{lemma}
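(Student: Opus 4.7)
I plan to prove (g) by induction on $i \ge 1$. Let $B_j = \operatorname{im}(C_{j+1} \to C_j)$, $Z_j = \ker(C_j \to C_{j-1})$, and $H_j = Z_j/B_j = \HH_j(C_\bullet)$. The argument turns on two short exact sequences:
\begin{equation*}
0 \to B_0 \to C_0 \to M \to 0 \qquad \text{and} \qquad 0 \to H_1 \to C_1/B_1 \to B_0 \to 0,
\end{equation*}
the second obtained by splicing $0 \to Z_1 \to C_1 \to B_0 \to 0$ with $0 \to B_1 \to Z_1 \to H_1 \to 0$.

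The $\Hom(-,N)$ long exact sequence of the first short exact sequence, combined with the vanishing $\Ext^i(C_0,N)=0$ (from $C_{\mathrm{top},i}$ at $j=0$), yields a natural homogeneous surjection $\Ext^{i-1}(B_0,N)\twoheadrightarrow\Ext^i(M,N)$. The long exact sequence of the second, combined with $\Ext^{i-2}(H_1,N)=0$ (from $H_{\mathrm{bot},i}$ at $j=1$), yields a natural homogeneous injection $\Ext^{i-1}(B_0,N)\hookrightarrow\Ext^{i-1}(C_1/B_1,N)$. Together these display $\Ext^i(M,N)$ as a subquotient of $\Ext^{i-1}(C_1/B_1,N)$.

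For the inductive step (when $i\ge 2$), I would pass to the shifted complex $D_\bullet$ defined by $D_k=C_{k+1}$ with inherited differentials, so that $\HH_0(D_\bullet)=C_1/B_1$ and $\HH_k(D_\bullet)=H_{k+1}$ for $k\ge 1$. The hypotheses $H_{\mathrm{bot},i-1}$ and $C_{\mathrm{top},i-1}$ for the data $(D_\bullet, C_1/B_1, N)$ become the restrictions of $C_{\mathrm{top},i}$ and $H_{\mathrm{bot},i}$ for $(C_\bullet, M, N)$ to the ranges $1\le j\le i-1$ and $2\le j\le i-1$ respectively, so they hold automatically. The inductive hypothesis therefore exhibits $\Ext^{i-1}(C_1/B_1,N)$ as a subquotient of $\HH^{i-1}(\Hom(D_\bullet,N))$, and this last module coincides with $\HH^i(\Hom(C_\bullet,N))$, since for $i\ge 1$ the position-$i$ cohomology of a Hom-complex is insensitive to the position-zero term. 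Transitivity of the subquotient relation then concludes. The base case $i=1$ is direct: $C_{\mathrm{top},1}$ alone identifies $\Ext^1(M,N)$ with $\Hom(B_0,N)/\operatorname{im}(\Hom(C_0,N))$, and this embeds into $\Hom(C_1/B_1,N)/\operatorname{im}(\Hom(C_0,N))=\HH^1(\Hom(C_\bullet,N))$ via the inclusion $\Hom(B_0,N)\hookrightarrow\Hom(C_1/B_1,N)$ dual to the surjection $C_1/B_1\twoheadrightarrow B_0$.

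The main technical obstacle is the bookkeeping: tracking which of the four hypotheses $H_{\mathrm{top},i}$, $H_{\mathrm{bot},i}$, $C_{\mathrm{top},i}$, $C_{\mathrm{bot},i}$ supplies each surjection or injection at each stage of the iteration, and checking naturality and homogeneity of every map produced along the way. Once the subquotient statement (g) (and, symmetrically, (f)) is secured by this induction, the stronger conclusions (a)--(e) follow by noting which step is upgraded to a full quotient, submodule, or isomorphism under the extra hypotheses --- so the same skeleton of short exact sequences handles all seven parts uniformly.
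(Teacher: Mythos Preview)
Your argument is correct and uses the same raw ingredients as the paper: the short exact sequences $0\to B_j\to Z_j\to H_j\to 0$ and $0\to Z_j\to C_j\to B_{j-1}\to 0$ (you splice them into $0\to H_1\to C_1/B_1\to B_0\to 0$) together with the Ext long exact sequences and the four vanishing hypotheses. The difference is organizational. You run an induction on $i$, at each stage producing $\Ext^i(M,N)$ as a subquotient of $\Ext^{i-1}(C_1/B_1,N)$ and then replacing $(C_\bullet,M)$ by the shifted complex $(D_\bullet,C_1/B_1)$. The paper instead unrolls this recursion once and for all into an explicit chain
\[
\HH^i(C_\bullet^\vee)\ \longleftrightarrow\ \coker(C_{i-1}^\vee\to Z_{i-1}^\vee)\ \longleftrightarrow\ \Ext^1(B_{i-2},N)\ \longleftrightarrow\ \Ext^1(Z_{i-2},N)\ \longleftrightarrow\ \cdots\ \longleftrightarrow\ \Ext^{i-1}(B_0,N)\ \longleftrightarrow\ \Ext^i(M,N),
\]
labelling each arrow with the hypothesis that makes it an injection or surjection. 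The payoff of the paper's explicit chain is that parts (a)--(e) become one-line observations: the arrows are literally the same in every case, and imposing an additional hypothesis upgrades specific arrows to isomorphisms, collapsing the chain to a pure injection, surjection, or isomorphism. In your inductive packaging this still works, but you must strengthen the inductive statement for each of (b)--(e) separately and re-check that the extra hypothesis descends to $D_\bullet$ (it does, by the same index shift you used for $H_{\mathrm{bot}}$ and $C_{\mathrm{top}}$; incidentally, you swapped those two names in your descent sentence). Your approach is tidier for (g) in isolation; the paper's is more transparent for treating all seven parts at once.
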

\begin{proof} For $i\le 0$ the assertions are obvious, hence we may assume that $i$ is a positive integer. 
Let $I^{\bullet}$ be a homogenous resolution of $N$ by graded injective modules. Consider the double complex $D^{pq}:= {^*\Hom}(C_q,I^p)$ and write $T^{\bullet}$ for its total complex. The horizontal and vertical filtration of the double complex yield first quadrant spectral sequences whose $E_2$ terms are
\[^{'}E_2^{p,q}\simeq\, ^*\Ext^p(\HH_q(C_{\bullet}),N) \qquad \mbox{and} \qquad ^{''}E_2^{p,q}\simeq \HH^q(^*\Ext^p(C_{\bullet},N))\, ,
\]
respectively. 

The infinity term $^{'}E_{\infty}^{p,q}$ is a homogeneous subquotient of $\, ^{'}E_2^{p,q}$. It is a homogeneous epimorphic image if $q=0$,
\begin{equation}\label{epih} 
\xymatrix{^{'}E_2^{p,0} \ar@{->>}[r]& ^{'}E_{\infty}^{p,0}\, .}
\end{equation}

On the other hand there is a homogeneous embedding 
\begin{equation}\label{embh} 
\xymatrix{^{'}E_{\infty}^{p,0} \ \ar@{^{(}->}[r]& \HH^p(T^{\bullet})\, ,}
\end{equation}
whose cokernel has a  filtration with factors  $\, ^{'}E_{\infty}^{p-q,q}$ for $1\le q\le p$.

The condition $U_{p-1}$ is equivalent to  $\, ^{'}E_2^{p-1-j,j}=0\, $ for $1\le j \le p-1$, which implies that $\, ^{'}E_{j+1}^{p-1-j,j}=0\, $ for $1\le j \le p-1$ and hence $\, ^{'}E_{j+1}^{p-1-j,j}=0\, $ for $1\le j$. It follows that  on the $(j+1)$-st page of the spectral sequence the natural map $\, ^{'}E_{j+1}^{p-1-j,j} \to \, ^{'}E_{j+1}^{p,0}$ is the zero map for $1\le j$. Hence the epimorphism of (\ref{epih}) is an isomorphism. 

The condition $U_p$ means that  $\, ^{'}E_2^{p-q,q}=0\, $ for $1\le q\le p$, which gives $\, ^{'}E_{\infty}^{p-q,q}=0\, $ for $1\le q\le p$. Hence the inclusion of (\ref{embh}) is an isomorphism. 

Likewise,  $^{''}E_{\infty}^{p,q}$ is a homogeneous subquotient of $\, ^{''}E_2^{p,q}$ and a homogeneous epimorphic image if $p=0$. Hence
\begin{equation}\label{epiv} 
\xymatrix{^{''}E_2^{0,q} \ar@{->>}[r]& ^{''}E_{\infty}^{0,q}\, .}
\end{equation}
Also there is a homogeneous embedding
\begin{equation}\label{embv} 
\xymatrix{^{''}E_{\infty}^{0,q}\  \ar@{^{(}->}[r]& \HH^q(T^{\bullet})\, ,}
\end{equation}
and the cokernel of this embedding has a filtration whose factors are $\, ^{''}E_{\infty}^{p,q-p}$ for $1\le p\le q$. 

Now $V_{q-1}$ implies $\, ^{''}E_2^{j-1,q-j}=0\, $ for $2\le j \le q$, which gives $\, ^{''}E_j^{j-1,q-j}=0\, $ for $2\le j$. Therefore the natural map  $\, ^{''}E_j^{j-1,q-j} \to \, ^{''}E_j^{0,q}$ is the zero map for $2\le j$, and hence the epimorphism of (\ref{epiv}) is an isomorphism. 

Finally, $V_q$ gives $^{''}E_2^{p,q-p}=0$ for $1\le p \le q$. Therefore $^{''}E_{\infty}^{p,q-p}=0,$ which means that the embedding of (\ref{embv}) is an isomorphism. 

The lemma now follows from the homogenous epimorphisms and embeddings  (\ref{epih}), (\ref{embh}), (\ref{embv}), (\ref{epiv}) for $p=q=i$ and the various conditions for when they are isomorphisms. 
\end{proof}

\begin{remark} The above proof shows that in Lemma~\ref{SPD} the condition $V_i$ can be replaced by the weaker assumption that
$\HH^j(\Ext^{i-j}(C_{\bullet},N))=0$ for all $j$ with $0\le j \le i-1$, and likewise for $V_{i-1}$. \end{remark}


Observation~\ref{SJD-doo} shows that the hypotheses  of Lemma  \ref{SPD} are implied by some natural assumptions on a complex. 

\begin{setup}\label{SU-SJD} Let $R$ be a non-negatively graded Cohen-Macaulay ring with $R_0$ a 
factor ring of a local Gorenstein ring. Let $d$ be the dimension of $R$ and assume that $1\le d$. Let $\mathfrak m$ be the maximal homogeneous ideal of $R$, $k=R/\mathfrak m$ its residue field, and $\omega=\omega_R$ its graded canonical module; see \ref{GCM}. Let
$$(C_{\bullet},\partial_\bullet):\quad \ \ldots \, \stackrel{\partial_3}\longrightarrow C_2\stackrel{\partial_2}\longrightarrow
C_1\stackrel{\partial_1}\longrightarrow C_0 \longrightarrow  0$$ be a homogeneous complex of finitely generated graded $R$-modules. Write $M=\HH_0(C_{\bullet})$ and $\HH_{\bullet}=\HH_{\bullet}(C_{\bullet})$. Let $(-)^\vee$ denote the functor $\Hom_R(-,\omega)$.
\end{setup}

\begin{observation}\label{SJD-doo} Adopt the setup of \/ {\rm\ref{SU-SJD}} and use the hypotheses $U_i$, $V_i$ of \/ {\rm \ref{SU-SPD}} with $N=\omega$. 
\begin{enumerate}[\rm(a)]
\item\label{SJD-doo-a} Fix an integer $i$. If $\, \dim \HH_j\le d-i+j\, $ for every $j$ with $1\le j\le i-1$, then the data satisfies condition $U_{\ell}\, $ for all $\ell$ with $ \ell\le i-1$. 
\item\label{SJD-doo-b} Fix  integers $r$ and $s$. If $\min\{d,d-r+j+1\}\le \depth C_j\, $ for every $j$ with $0\le j\le s-1$,     then the data satisfies condition $V_{\ell}\, $ for all $\ell$ with $r\le \ell\le s$. 
\end{enumerate}
\end{observation}
\begin{proof} We may assume that the local ring $R_0$ is complete. In the setting of (\ref{SJD-doo-a}), we have $\HH_{\mathfrak m}^k(H_j)=0$ for every $k$ with $d-i+j+1\le k$; that is, $d-(i-j-1)\le k$. By graded duality, this gives $\Ext^h_R(H_j,\omega)=0$ for every $h$ with $h\le i-j-1$. In (\ref{SJD-doo-b}), we have $\HH_{\mathfrak m}^k(C_j)=0$ whenever $$k\le \min\{d,d-r+j+1\}-1=d-\max\{1,r-j\},$$ which gives $\Ext^h_R(C_j,\omega)=0$ for every $h$ with $\max\{1,r-j\}\le h$.\end{proof}

Our first application of Lemma~\ref{SPD} is the next result, Proposition~\ref{localCoh}, which relates local cohomology modules 
along complexes and yields bounds on the top degree of such modules. Proposition~\ref{localCoh} is essentially known;
the idea goes  back to  Gruson, Lazarsfeld, and Peskine \cite[1.6]{GLP}, at least. 
Proposition~\ref{localCoh} has found applications in \cite{KPU-BA}, where we determine the implicit equations defining Rees rings 
of linearly presented grade three Gorenstein ideals.  

\bigskip
Recall the numerical functions of \ref{numerical-functions} and \ref{ainvariant}.

\begin{proposition}\label{localCoh}
Let $R$ be a   
non-negatively graded Noetherian algebra over a local ring $R_0$ with $\dim  R=d$,  $\mathfrak m$ be the maximal homogeneous ideal of $R$,  
$M$ be a 
 graded 
$R$-module, and \[C_{\bullet}: \qquad \ldots \  \longrightarrow C_1 \longrightarrow C_0 \longrightarrow 0 \]be a homogeneous complex of finitely generated  graded
$R$-modules with $\HH_0(C_{\bullet})=M$.
Fix an integer $i$.
Assume that
\begin{enumerate}[\rm(1)]
\item \label{localCoh-b}
$\dim \HH_j(C_{\bullet})\le j+i\ $ for all $j$ with $1\le j\le d-i-1$, and 
\item\label{localCoh-a} 
$j+i+1\le \depth C_j \ $ for all $j$ with $0\le j\le d-i-1$.
\end{enumerate}
Then \begin{enumerate}[\rm(a)]\item\label{localCoh-1}
$\HH^i_{\mathfrak m}(M)$ is a graded subquotient of  $\,  \HH^d_{\mathfrak m}(C_{d-i})$, and
\item \label{localCoh-2} $a_i(M)\le b_0 (C_{d-i})+ a(R)$.  
\end{enumerate}\end{proposition}

\smallskip

\begin{remark}\label{rmk-3} If
\begin{equation}\label{easy-b}\HH_{j}(C_\bullet)_\mathfrak p=0\  \text{for all $j$ and $\mathfrak p$ with } 
1\le j\le d-i-1,\ 
\mathfrak p\in \operatorname{Spec}(R), \text{ and }
 i+2\le\dim R/\mathfrak p,
\end{equation} then hypothesis (\ref{localCoh-b}) is satisfied. Typically, one applies Proposition~\ref{localCoh} when the modules $C_j$ are maximal Cohen-Macaulay modules (for example, free modules over a Cohen-Macaulay ring), because, in this case, hypothesis (\ref{localCoh-a}) about $\depth C_j$ is automatically satisfied. 

\end{remark}

\begin{proof}  (\ref{localCoh-1})
Completing $R_0$ does not change the local cohomology modules in question; hence we may assume that $R_0$ is complete. We use the notation of \ref{GCM}; most notably, $S$ is a non-negatively graded polynomial ring over a local Gorenstein ring $T$, which we may assume to be complete,
and $R$ is obtained from $S$ by factoring out a homogeneous ideal $\mathcal J$ of height $g$. Since $S$ is a polynomial ring over a Cohen-Macaulay ring, the ideal 
$\mathcal J$ contains a homogenous regular sequence $\underline \alpha$ of length $g$. Now $S/(\underline \alpha)$ and $R$ have the same dimension $d$, and we may safely 
replace the latter ring by the former in order to assume that $R$ is Cohen-Macaulay with $R_0$ complete.

When the hypotheses of Proposition~\ref{localCoh} are inserted into Observation ~\ref{SJD-doo}, one obtains, in particular, 
 that the conditions $U_{d-i-1}$ and $V_{d-i}$ hold; so Lemma~\ref{SPD}.\ref{SPD-E-sq-of-H} guarantees that $\Ext^{d-i}(M,\omega)$ is a graded subquotient of 
$\HH^{d-i}(\Hom(C_{\bullet},\omega))$, which is a graded subquotient of
$\Hom(C_{d-i},\omega)$. Graded duality yields $\HH_\mathfrak m^i(M)$ is a graded subquotient of $\HH_\mathfrak m^d(C_{d-i})$. 

\medskip\noindent(\ref{localCoh-2}) Apply (\ref{localCoh-1}) to see that
$$a_i(M)\le a_d(C_{d-i}).$$
Let $F$ be a finitely generated graded free $R$-module which maps surjectively onto 
$C_{d-i}$ so that $b_0(F)=b_0(C_{d-i})$. The long exact sequence of local cohomology gives a surjection \[\xymatrix{\HH^d_{\mathfrak m}(F)\ar@{->>}[r]&\HH^d_{\mathfrak m}(C_{d-i})},\] which shows that 
$$a_d(C_{d-i})\le a_d(F)=b_0(F)+a(R)=b_0(C_{d-i})+a(R).$$ \end{proof}

\begin{corollary}\label{cor-to-localCoh-1}
Adopt the hypotheses of Proposition~{\rm\ref{localCoh}}, with $i=0$.  Then 
$$[\HH^0_\mathfrak m(M)]_\ell=0 \quad \text{for all $\ell$ with}\quad   \maxgendeg(C_d)+a(R)< \ell.$$
\end{corollary}
 
 \smallskip
 
\begin{Reminder}Keep in mind that the hypotheses of Proposition~{\rm\ref{localCoh}} are satisfied if the conditions of Remark~\ref{rmk-3} are in effect.\end{Reminder} 

\begin{corollary}\label{cor-to-localCoh}
Let $R=k[x_1,\dots,x_d]$ be a standard graded polynomial ring over a field, with maximal homogeneous ideal $\mathfrak m$,  
$$C_{\bullet}:\quad \ \ldots \ \longrightarrow C_2 \longrightarrow C_1 \longrightarrow  C_0\longrightarrow 0$$ be a homogeneous complex of finitely generated graded free $R$-modules, and $M=\HH_0(C_{\bullet})$. Assume that 
$\dim \HH_j(C_\bullet)\le j\, $ for all $1\le j\le d-1$  and that 
the subcomplex 
$$C_{d}\longrightarrow \ \ldots \ \longrightarrow C_0\longrightarrow 0$$ of $C_\bullet$ 
is  $q$-linear for some integer $q$. 
Then $\HH^0_\mathfrak m(M)$ is concentrated in degree $q$; that is, $[\HH^0_\mathfrak m(M)]_\ell=0\, $ for all $\ell$ with $\ell\neq q$.
\end{corollary}
\begin{proof} Apply Corollary~\ref{cor-to-localCoh-1}.
We may assume that $C_d\neq 0$ and we 
  see that 
$[\HH^0_\mathfrak m(M)]_\ell=0$ for  $\ell$ with  $ \maxgendeg(C_d)+a(R)< \ell$. But $$ \maxgendeg(C_d)+a(R)=(q+d)-d=q\, ;$$so $[\HH^0_\mathfrak m(M)]_\ell=0$ for $q<\ell$.
On the other hand, $\HH^0_\mathfrak m(M)$ is a graded submodule of $M$ and 
$[M]_\ell=0$ for all $\ell$ with $\ell<q$.
\end{proof}

\bigskip

\section{Bounds on generator degrees of local cohomology modules}\label{TMAS}

In this section, we use Lemma \ref{SPD} and Proposition~\ref{SJD-P} below to prove our main result,
Theorem~\ref{SJD-cor1}. The rest of the section is devoted to applications of this result.
\begin{proposition}\label{SJD-P} Adopt the setup of {\/ \rm \ref{SU-SJD}}. Fix integers $i$ and $t$ with  $1\le t$. Assume\begin{enumerate}[\rm(1)]
\item\label{SJD-P.1} $\HH^{\ell}_{\mathfrak m}(M)=0\ $ for all $\ell$ with $d-i+1\le \ell \le d-i+t-1$,
\item\label{SJD-P.2} $\dim\HH_j(C_{\bullet})\le d-i+j\ $ for all $j$ with $1\le j\le i-1$,
\item\label{SJD-P.3} $\min\{d,d-i+t+j+1\}\le \depth C_j\ $ for all $j$ with $0\le j\le i-1$, and
\item\label{SJD-P.4} $i-j+1\le \depth C_j^\vee\ $ for all $j$ with $i-t+1\le j\le i-1$.
\end{enumerate}
Then there is a natural homogeneous injection
$$\xymatrix{\socle(\Ext^i_R(M,\omega_R))\ar@{^{(}->}[r]&\Ext^{t}_R(k,\operatorname{im}(\partial^\vee_{i-t+1}))}.$$ Moreover,  $t\le \depth(\operatorname{im}(\partial^\vee_{i-t+1}))$, and equality holds if $\depth \Ext^i_R(M,\omega_R)=0$.
\end{proposition}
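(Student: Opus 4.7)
The plan is to work with the dualized complex $D^\bullet := C_\bullet^\vee = \Hom_R(C_\bullet,\omega)$: first compare $\Ext^i_R(M,\omega)$ with its cohomology by means of Lemma~\ref{SPD}, and then push the socle of this cohomology down $D^\bullet$ by iterating short exact sequences. Write $d^j$ for the differentials of $D^\bullet$, $Z^j=\ker d^j$, $B^j=\operatorname{im} d^{j-1}$, so that $I=\operatorname{im}(\partial^\vee_{i-t+1})=B^{i-t+1}$. Feeding hypotheses (\ref{SJD-P.2}) and (\ref{SJD-P.3}) into Observation~\ref{SJD-doo}, with $r=i-t$ and $s=i$ in part (\ref{SJD-doo-b}) (using $r=1$ in the extremal case $t=i$), supplies $H_{{\rm bot},\ell}$ for $1\le\ell\le i$, $H_{{\rm top},\ell}$ for $1\le\ell\le i-1$, $C_{{\rm top},\ell}$ for $\max\{1,i-t\}\le\ell\le i$, and $C_{{\rm bot},\ell}$ for $\max\{1,i-t+1\}\le\ell\le i+1$. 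Lemma~\ref{SPD}(\ref{SPD-E-in-H}) now yields a natural homogeneous injection $\Ext^i_R(M,\omega)\hookrightarrow H^i(D^\bullet)$, and Lemma~\ref{SPD}(\ref{SPD-iso}) gives isomorphisms $\Ext^\ell_R(M,\omega)\cong H^\ell(D^\bullet)$ for $\max\{1,i-t+1\}\le\ell\le i-1$.

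Hypothesis (\ref{SJD-P.1}) combined with local duality then forces the key intermediate vanishings. The bound $\depth M/H^0_{\mathfrak m}(M)\ge d-i+t$ yields $\Ext^h_R(M/H^0_{\mathfrak m}(M),\omega)=0$ for $h\ge i-t+1$ by local duality; the long exact sequence of $0\to H^0_{\mathfrak m}(M)\to M\to M/H^0_{\mathfrak m}(M)\to 0$ therefore identifies $\Ext^\ell_R(M,\omega)\cong\Ext^\ell_R(H^0_{\mathfrak m}(M),\omega)$ for $\ell\ge i-t+1$. Since $\dim H^0_{\mathfrak m}(M)=0$, local duality again forces $\Ext^\ell_R(H^0_{\mathfrak m}(M),\omega)=0$ for $\ell<d$. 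Combined with the isomorphisms above and the inequality $i\le d$, this gives $H^\ell(D^\bullet)=0$ for $i-t+1\le\ell\le i-1$.

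The core of the proof is a chain of natural injections built from two families of short exact sequences in $D^\bullet$: $(A_j)\colon 0\to Z^j\to D^j\to B^{j+1}\to 0$ and $(B_j)\colon 0\to B^j\to Z^j\to H^j(D^\bullet)\to 0$. Applying $\Hom_R(k,-)$ to $(B_i)$ and using $\Hom_R(k,D^i)=0$ (from $\depth D^i\ge 1$, hypothesis (\ref{SJD-P.4})) produces a first injection $\Hom_R(k,H^i(D^\bullet))\hookrightarrow\Ext^1_R(k,B^i)$. Then, for $s=2,\ldots,t$ in turn, the sequence $(A_{i-s+1})$ combined with $\Ext^{s-1}_R(k,D^{i-s+1})=0$ (from $\depth D^{i-s+1}\ge s$, hypothesis (\ref{SJD-P.4})) provides $\Ext^{s-1}_R(k,B^{i-s+2})\hookrightarrow\Ext^s_R(k,Z^{i-s+1})$, and the vanishing $H^{i-s+1}(D^\bullet)=0$ from paragraph two (valid for $2\le s\le t$) identifies $Z^{i-s+1}=B^{i-s+1}$. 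Concatenating these and composing with the socle-level injection $\socle(\Ext^i_R(M,\omega))\hookrightarrow\Hom_R(k,H^i(D^\bullet))$ induced by left-exactness of $\Hom_R(k,-)$ applied to $\Ext^i_R(M,\omega)\hookrightarrow H^i(D^\bullet)$ yields
\[
\socle(\Ext^i_R(M,\omega))\hookrightarrow\Ext^t_R(k,B^{i-t+1})=\Ext^t_R(k,I),
\]
with the case $t=1$ stopping after the very first link.

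For the depth claim, the same vanishings assemble into an exact sequence
\[
0\to I\to D^{i-t+1}\to D^{i-t+2}\to\cdots\to D^{i-1}\to B^i\to 0.
\]
The standard depth estimate $\depth I\ge\min_{1\le j\le t}\{\depth N_j+j-1\}$ (with $N_j=D^{i-t+j}$ for $1\le j\le t-1$ and $N_t=B^i$), together with $\depth D^{i-t+j}\ge t-j+1$ from hypothesis (\ref{SJD-P.4}) and $\depth B^i\ge 1$ (obtained from the inclusion $\socle(B^i)\hookrightarrow\socle(D^i)=0$), yields $\depth I\ge t$. If $\depth\Ext^i_R(M,\omega)=0$, then $\socle(\Ext^i_R(M,\omega))\ne 0$, so the chain of injections forces $\Ext^t_R(k,I)\ne 0$ and hence $\depth I\le t$; equality follows. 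The principal obstacle I anticipate is indexing bookkeeping: at each stage $s$ of the chain one must marshal precisely the depth bound on $D^{i-s+1}$ supplied by (\ref{SJD-P.4}) and the intermediate vanishing $H^{i-s+1}(D^\bullet)=0$ from paragraph two, and the extremal cases $t=1$ and $t=i$ require separate verification as slight degenerations of the generic chain.
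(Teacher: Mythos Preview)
Your proposal is correct and follows essentially the same route as the paper: invoke Observation~\ref{SJD-doo} to obtain the conditions of Setup~\ref{SU-SPD}, apply Lemma~\ref{SPD} parts (\ref{SPD-E-in-H}) and (\ref{SPD-iso}) to embed $\Ext^i_R(M,\omega)$ into $\HH^i(C_\bullet^\vee)$ and identify the intermediate cohomology with intermediate $\Ext$'s, use hypothesis~(\ref{SJD-P.1}) and local duality to force those $\Ext$'s (hence the intermediate cohomology) to vanish, and then chase the socle down the resulting exact piece of $C_\bullet^\vee$ using hypothesis~(\ref{SJD-P.4}). Two cosmetic points: you should begin by reducing to the case that $R_0$ is complete so that local duality is available in the form you invoke (the paper does this), and when you apply $\Hom_R(k,-)$ to $(B_i)$ the relevant vanishing is $\Hom_R(k,Z^i)=0$, which follows from your cited $\Hom_R(k,D^i)=0$ via the inclusion $Z^i\subseteq D^i$.
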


\begin{remark}\label{rmk-4}Hypothesis (\ref{SJD-P.1}) is always satisfied when $t=1$, or $ \depth M/\HH^0_{\mathfrak m}(M)\ge d-i+t$,  or, in particular, 
$M/\HH^0_{\mathfrak m}(M)=0$. A slightly modified proof shows that condition (\ref{SJD-P.1}) can be replaced by the weaker assumption
\[ \depth {\rm Ext}^{\ell}_R(M, \omega)\ge i+2-\ell \mbox{   \ for all } \ell  \mbox { with } i-t+1\le \ell \le i-1\,.
\]

\noindent
Hypothesis (\ref{SJD-P.2}) is satisfied for $i\le d$  if
$$\HH_{j}(C_\bullet)_\mathfrak p=0\  \text{ for all $j$ and $\mathfrak p$ with } 
1\le j\le i-1 \text{ and }
2\le \dim R/\mathfrak p\, .
$$  As observed in Remark~\ref{rmk-3},  one typically applies Proposition~\ref{SJD-P} when the modules $C_j$ are maximal Cohen-Macaulay modules, because, in this case, hypotheses (\ref{SJD-P.3}) and (\ref{SJD-P.4}) are automatically satisfied for $t\le d$. 
\end{remark}

\begin{proof} Hypotheses (\ref{SJD-P.2}) and (\ref{SJD-P.3}) and Observation~\ref{SJD-doo} imply that 
\begin{align}\label{May15a} &\text{the condition $U_{h}$ of {\ref{SU-SPD}} holds when $h\le i-1$}\\
 %
%
\label{May15b} &\text{the condition
$V_{h}$ of {\ref{SU-SPD}} holds when 
$i-t\le h\le i$.} 
\end{align}
Use (\ref{May15a}), (\ref{May15b}), and  
 Lemma~\ref{SPD}.\ref{SPD-E-in-H} in order to conclude that there is a natural homogeneous injection \begin{equation}\label{May15c}\xymatrix{\Ext^i(M,\omega)\ar@{^{(}->}[r]& \HH^i(C_{\bullet}^\vee)}.\end{equation} 
Combine (\ref{May15c}) and  the natural inclusion 
 $\xymatrix{\HH^i(C^\vee_\bullet) \ar@{^{(}->}[r]& \coker (\partial_i^\vee)}$ 
to see that
 $$\xymatrix{\socle(\Ext^i_R(M,\omega))\ar@{^{(}->}[r]& \socle(\coker (\partial_i^\vee))}.$$

Assumption (\ref{SJD-P.1}) 
implies that $$\Ext^h_R(M,\omega)=0\ \text{ for $i-t+1\le h\le i-1$.}$$ 
Use (\ref{May15a}), (\ref{May15b}),  and Lemma~\ref{SPD}.\ref{SPD-iso} to conclude that  $\, \HH^h(C_{\bullet}^\vee)\simeq \Ext^h(M,\omega)\, $ for $i-t+1\le h\le i-1$;  and therefore $$\HH^h(C_{\bullet}^\vee)=0\ \text{ for $i-t+1\le h\le i-1$.}$$
It follows that the complex 
\begin{equation}\label{the exact complex}C_{i-t}^\vee\xrightarrow{\partial_{i-t+1}^\vee}C_{i-t+1}^\vee \xrightarrow{\partial_{i-t+2}^\vee}\ \cdots \ \xrightarrow{\partial_{i-1}^\vee} 
C_{i-1}^\vee\xrightarrow{\partial_{i}^\vee}C_{i}^\vee \xrightarrow{}
\coker (\partial_i^\vee)\to 0\end{equation} is exact. 
Notice that $1\le \depth C_i^\vee$ since $1 \le \depth \omega_R$. Hence the inequality in Assumption (\ref{SJD-P.4}) holds for $j$ with $i-t+1\le j \le i$. It follows that 
 $\, \Ext^h_R(k,C_{i-h}^\vee)=0\, $ for $0\le h\le t-1$. Long exact sequences associated to $\Ext^{\bullet}_R(k,-)$ then show that
$$\xymatrix{\socle (\coker(\partial_i^\vee))\simeq \Hom_R(k,\coker(\partial_i^\vee))
\ar@{^{(}->}[r]& \Ext^t_R(k,\operatorname{im}(\partial_{i-t+1}^\vee)).}$$

Moreover, $t\le \depth (\operatorname{im} (\partial_{i-t+1}^\vee))$
by the exact complex (\ref{the exact complex}).
 Equality holds unless $$\Ext^t_R(k,\operatorname{im}(\partial_{i-t+1}^\vee))=0,$$ which means $\socle(\Ext^i_R(M,\omega))=0$, hence $0<\depth \Ext^i_R(M,\omega)$.\end{proof}

\begin{theorem}\label{SJD-cor1} Adopt the setup of {\/ \rm\ref{SU-SJD}} and the hypotheses of {\/ \rm\ref{SJD-P}}.
Let $a(R)$ denote the $a$-invariant of $R$ and let $\Theta_t$ be the invariant of $R$ from {\rm\ref{Theta-sub-t}}. 
 If $\, \HH^{d-i}_\mathfrak m(M)$ is finitely generated as an $R$-module, then
$$b_0(\HH^{d-i}_\mathfrak m(M))\le \maxgendeg(C_{i-t})+\Theta_t+a(R).$$
\end{theorem}
\begin{Reminder}Keep in mind that the hypotheses of Proposition~{\rm\ref{SJD-P}} are satisfied if the conditions of Remark~\ref{rmk-4} are in effect.\end{Reminder}

\begin{remark} The assumption in Theorem~\ref{SJD-cor1} that $\, \HH^{d-i}_\mathfrak m(M)$ is finitely generated is satisfied
  if $i=d$ because $\, \HH^{0}_\mathfrak m(M)$ is a graded submodule of $M$. From graded duality it follows that the same assumption is also satisfied if  
  $\dim R_{\p} -\depth M_{\p} <i\, $ for all $\p\in \Supp (M)\setminus \{\m\}$. The last condition holds, for instance, if
   $R_0$ is universally catenary, ${\rm Supp}_R (M)$ is equidimensional of dimension $\not=d-i$, and $M$ is Cohen-Macaulay locally on the punctured spectrum of $R$. 
\end{remark}
\begin{proof}
We may assume that $R_0$ is complete. Applying graded duality and Hom-tensor adjointness twice one obtains isomorphisms of graded $k$-vector spaces
$$\begin{array}{lll}\Hom_R(k, \Ext^i_R(M,\omega))&\simeq& ^*\Hom_{R_0}(k\otimes_R\HH_{\mathfrak m}^{d-i}(M),E_{R_0}(k))\\
&\simeq&^*\Hom_{R}(\HH_{\mathfrak m}^{d-i}(M), \Hom_{R_0}(k,E_{R_0}(k)))\\&\simeq&^*\Hom_{R}(\HH_{\mathfrak m}^{d-i}(M),k).\end{array}$$
It follows that 
\begin{equation}\label{adjointness}^*\Hom_k(k\otimes_R\HH_{\mathfrak m}^{d-i}(M),k)\simeq \socle(\Ext^i_R(M,\omega)).\end{equation}
\noindent
Since $\HH_{\mathfrak m}^{d-i}(M)$ is a finitely generated graded $R$-modules, the graded Nakayama lemma gives
\[\maxgendeg(\HH_{\mathfrak m}^{d-i}(M))=\maxgendeg(k\otimes_R\HH_{\mathfrak m}^{d-i}(M))\,.
\]
Now from (\ref{adjointness}) one attains 
$$\maxgendeg(\HH_{\mathfrak m}^{d-i}(M))=-\indeg(\socle ( \Ext^i_R(M,\omega))).$$
Thus, it remains to prove that
$$  \indeg(\socle(\Ext^i_R(M,\omega))\ge -\maxgendeg(C_{i-t})-\Theta_t-a(R).$$

From Proposition~\ref{SJD-P} we have a homogeneous embedding
$$\xymatrix{\socle(\Ext^i_R(M,\omega)) \ \ar@{^{(}->}[r]&\Ext^t_R(k,N),}$$ where $N=\operatorname{im}(\partial_{i-t+1}^\vee)$. By the same proposition, $\depth N\ge t$. Hence conditions $U_{t-1}$ and $U_{t}$ of {\ref{SU-SPD}} are both satisfied for $C_{\bullet}:=K_{\bullet}$, the Koszul complex of a minimal homogenous generating set of $\m$ as in  (\ref{Theta-sub-t}). The conditions $V_{t-1}$ and $V_t$ are trivially satisfied as $K_{\bullet}$ is a complex of free modules. Therefore Lemma~\ref{SPD}.\ref{SPD-iso} gives the well-known identification
\[\Ext^t_R(k,N)\simeq\HH^t(\Hom_R(K_{\bullet},N))\,.
\]

The latter homology module is a homogenous subquotient of $\Hom_R(K_t,N)$, which in turn is an epimorphic image of $\Hom_R(K_t, C_{i-t}^\vee)$. Let $F$ be a finitely generated graded free $R$-module, with $\maxgendeg (F)=\maxgendeg (C_{i-t})$, that maps homogeneously onto $C_{i-t}$. We obtain a homogeneous inclusion $\xymatrix{C_{i-t}^\vee\ar@{^{(}->}[r]&F^\vee}$. 
Thus:
$$\xymatrix{
&\Hom_R(K_t, C_{i-t}^\vee)
\ar@{^{(}->}[r]\ar@{->>}[d]&\Hom_R(K_t, F^\vee)
\\&\Hom_R(K_t,N).
}$$
We conclude that $\socle(\Ext^i_R(M,\omega))$ is a homogeneous  subquotient of $\Hom_R(K_t, F^\vee)$. Therefore,

$$\begin{array}{lll}\indeg( \socle (\Ext^i_R(M,\omega)))&\ge& \indeg \Hom_R(K_t, F^\vee)\\&=&\indeg F^\vee-\Theta_t\\&=&-\maxgendeg (F)+\indeg \omega-\Theta_t\\&=&-\maxgendeg (C_{i-t})-a(R)-\Theta_t.\end{array}$$
\end{proof}


Corollary~\ref{reg_i} is a self-contained reformulation of Theorem~\ref{SJD-cor1}. The purpose of this reformulation is to obtain one simultaneous bound for $\topdeg \HH_{\mathfrak m}^r(M)$ (which is the subject of Proposition~\ref{localCoh}) and $b_0(\HH_{\mathfrak m}^s(M))$ (which is the subject of Theorem~\ref{SJD-cor1}) for appropriately related $r$ and $s$. We resume this theme in Corollary~\ref{partial-reg-}.
\begin{corollary}\label{reg_i} Adopt the setup of {\/\rm\ref{SU-SJD}}. 
Fix integers $i$ and $t$ with  $1\le t\le \mu(\m)$. Let $a(R)$ denote the $a$-invariant of $R$ and let $\Theta_t$ be the invariant of $R$ from {\rm\ref{Theta-sub-t}}. 
Assume\begin{enumerate}[\rm(1)]
\item\label{SJD-P.1-2}  $\HH^{\ell}_{\mathfrak m}(M)=0\ $ for all $\ell$ with $i-t+1\le \ell \le i-1$,
\item\label{SJD-P.2-2} $\dim\HH_j(C_{\bullet})\le j+i-t$ \ for all $j$ with $1\le j\le d-1-i+t$, and
\item\label{SJD-P.3-2} $C_j$ is a maximal Cohen-Macaulay module for all $j$ with $0\le j\le d-1-i+t$.
\end{enumerate}  If $\, \HH^{i-t}_\mathfrak m(M)$ is finitely generated as an $R$-module, then
$$\sup\{ b_0(\HH^{i-t}_{\mathfrak m}(M)) -\Theta_t,a_i(M)\}\le b_0 (C_{d-i})+ a(R).$$ 
\end{corollary}

\begin{proof} Apply Theorem~\ref{SJD-cor1} (with $i$ replaced by $d-i+t$) and 
Proposition~\ref{localCoh}.\ref{localCoh-2}.
\end{proof}

Various forms of partial regularity appear in the literature; see, for example,
\cite{HT,C12,N}. 
Recall from \ref{aiM} that
$$\reg M=\sup\{a_i(M)+i\mid 0\le i\}.$$
The number $\reg(M/\HH^0_{\mathfrak m}(M))$ which appears on the left side of 
(\ref{partial-reg-.gts}) is equal to the {\it partial regularity} 
\begin{equation}\label{pr}\sup\{a_i(M)+i\mid 1\le i\}\end{equation}
of $M$. We obtain the right side of (\ref{partial-reg-.gts}) as an upper bound for the partial regularity (\ref{pr}) of $M$. Then we show that the 
maximal generator degree of the 
submodule $\HH^0_{\mathfrak m}(M)$ of $M$ that is ignored in the calculation of (\ref{pr}) satisfies the same bound.

We also offer the following interpretation of the right side of (\ref{partial-reg-.gts}). If $C_{\bullet}$ had been a minimal homogeneous resolution 
of $M$ by free $R$-modules, then $b_0(C_i)$ would equal $b_i(M)$. Of course, $$\sup\{b_i(M)-i\mid 0\le i\le d-t\}$$ is another partial regularity of $M$. 
\begin{corollary}\label{partial-reg-} Adopt the setup of {\/ \rm\ref{SU-SJD}} and assume in addition that $R$ is a 
standard graded ring with $R_0$ a field. 
Fix an  integer $t$ with  $1\le t\le \mu(\m)$. Assume\begin{enumerate}[\rm(1)]
\item\label{SJD-P.1-3} $t\le \depth M/\HH^0_{\mathfrak m}(M)$,
\item\label{SJD-P.2-3} $\dim\HH_j(C_{\bullet})\le j\ $ for all $j$ with $1\le j\le d-1$, and
\item\label{SJD-P.3-3} $C_j$ is a maximal Cohen Macaulay module for all $j$ with $0\le j\le d-1$.
\end{enumerate} Then
\begin{equation}\label{partial-reg-.gts}\sup\{ b_0(\HH^0_{\mathfrak m}(M)), \reg (M/\HH_{\mathfrak m}^0(M))\}\le \sup\{ b_0(C_{i})-i\mid 0\le i \le d-t\}+
\reg(R).\end{equation} 
\end{corollary}

\begin{proof}The assumption on  the depth of $M/\HH^0_{\mathfrak m}(M)$ implies that $$\HH^i_{\mathfrak m}\big(M/\HH^0_{\mathfrak m}(M)\big)=0\quad\text{for $0\le i \le t-1$}.$$ 
Therefore
\begin{align} \reg (M/\HH_{\mathfrak m}^0(M))&=  \sup\{ a_i(M)+i \mid t\le i \le d\}\notag\\
&\le \sup\{b_0(C_{d-i})+a(R)+i\mid t\le i\le d\}&&\text{by Proposition~\ref{localCoh}.\ref{localCoh-2}}\notag\\
&= \sup\{b_0(C_{i})+a(R)+d-i\mid 0\le i\le d-t\}.\notag\intertext{
Apply Theorem~\ref{SJD-cor1} with $i=d$ to obtain}
b_0(\HH^0_{\mathfrak m}(M))&\le b_0(C_{d-t})+t+a(R).\notag\end{align}The invariant $\Theta_t$ from \ref{SJD-cor1} and \ref{Theta-sub-t} is $t$ because $R$ is standard graded, and $a(R)+d=\reg(R)$ because $R$ is standard graded and Cohen-Macaulay.\end{proof}

\smallskip

Corollary~\ref{SJD-cor3} is well known and easy to prove if $R$ is a standard graded polynomial ring over a field because, in this case, the sum $\Theta_d + a(R)$ is zero, the  maximal Cohen-Macaulay 
module $M/\HH_{\mathfrak m}^0(M)$ is free, $\HH_{\mathfrak m}^0(M)$ is a direct summand of $M$, and it is true and clear that
$$\maxgendeg (\HH_{\mathfrak m}^0(M))\le \maxgendeg(M).$$ On the other hand, the result in the stated generality is new and intriguing.

\begin{corollary}\label{SJD-cor3} Let $R$ be a non-negatively graded Cohen-Macaulay  ring with $R_0$ a local ring. Denote the maximal homogeneous ideal of $R$ by $\mathfrak{m}$ and $\dim R$ by $d$. Let $M$ be a finitely generated graded $R$-module and assume that $M/\HH_{\mathfrak m}^0(M)$ is a maximal Cohen-Macaulay $R$-module. Then
$$\maxgendeg (\HH_{\mathfrak m}^0(M))\le \maxgendeg (M)+\Theta_d + a(R),$$where $\Theta_d$ is defined in {\rm\ref{Theta-sub-t}} and $a(R)$ is the $a$-invariant of $R$. 
\end{corollary}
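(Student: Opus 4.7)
The strategy is to apply Corollary~\ref{SJD-cor1} with $i=t=d$ to a complex $C_{\bullet}$ specifically constructed so that $\HH_0(C_{\bullet})=M$, $b_0(C_0)=b_0(M)$, and the requisite depth conditions on the $C_j$ hold. The case $d=0$ is trivial: then $\HH^0_\mathfrak m(M)=M$, $\Theta_0=0$, and $a(R)=\topdeg R\ge 0$, so the bound reduces to $b_0(M)\le b_0(M)+a(R)$. Assume henceforth $d\ge 1$.

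I would build $C_{\bullet}$ by iterated free covers of syzygies. Choose a minimal homogeneous graded free cover $C_0=F_0\twoheadrightarrow M$, so that $b_0(C_0)=b_0(M)$, and let $K_0=\ker(F_0\to M)$. For $1\le j\le d-1$, recursively pick a homogeneous graded free cover $C_j=F_j\twoheadrightarrow K_{j-1}$ and set $K_j=\ker(F_j\to K_{j-1})$. Since $\depth M=0$, the depth lemma applied inductively to $0\to K_j\to F_j\to K_{j-1}\to 0$ gives $\depth K_j\ge\min\{d,\,j+1\}$; in particular $K_{d-1}$ is maximal Cohen-Macaulay. Set $C_d=K_{d-1}$ with its natural inclusion into $F_{d-1}$, and $C_j=0$ for $j>d$. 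The resulting complex satisfies $\HH_0(C_{\bullet})=M$ and $\HH_j(C_{\bullet})=0$ for all $j\ge 1$.

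Next I would verify the hypotheses of Corollary~\ref{SJD-cor1} with $i=t=d$. Condition (1) becomes $d\le\depth M/\HH^0_\mathfrak m(M)$, which is the maximal Cohen-Macaulay hypothesis on $M/\HH^0_\mathfrak m(M)$. Condition (2) is vacuous. Condition (3) asks $\depth C_j\ge d$ for $0\le j\le d-1$, which holds because each such $C_j$ is free. Condition (4) asks $\depth C_j^\vee\ge d-j+1$ for $1\le j\le d$: for $j\le d-1$ the module $C_j^\vee$ is a direct sum of shifts of $\omega_R$, hence MCM, while for $j=d$, $C_d^\vee=K_{d-1}^\vee$ is MCM by the standard duality of maximal Cohen-Macaulay modules over a Cohen-Macaulay ring with canonical module. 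The module $\HH^0_\mathfrak m(M)$ has finite length, hence is finitely generated as required. Corollary~\ref{SJD-cor1} then delivers
$$b_0(\HH^0_\mathfrak m(M))\le b_0(C_0)+\Theta_d+a(R)=b_0(M)+\Theta_d+a(R).$$

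The main obstacle is reconciling two competing demands: to capture $b_0(M)$ in the bound we want $C_0$ close to $M$, but to satisfy condition (3) the first $d$ terms of $C_{\bullet}$ must each have depth $d$, which rules out taking $C_0=M$ itself (recall $\depth M=0$). Replacing $M$ by a minimal free cover absorbs $b_0(M)$ into $b_0(C_0)$ while restoring full depth, and the depth lemma guarantees that after exactly $d-1$ additional free covers the $(d-1)$st syzygy becomes MCM, which is precisely what is needed to cap the complex at position $d$ without introducing unwanted homology or violating the depth hypothesis on $C_d^\vee$.
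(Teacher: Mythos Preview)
Your proof is correct and follows essentially the same approach as the paper: apply Corollary~\ref{SJD-cor1} with $i=t=d$ to a complex $C_\bullet$ resolving $M$ with $b_0(C_0)=b_0(M)$. The paper simply takes $C_\bullet$ to be a minimal free resolution of $M$ (possibly infinite), so that every $C_j$ is free and hence hypotheses (3) and (4) hold automatically; your truncation at position $d$ with the syzygy $K_{d-1}$ and the accompanying depth-lemma and MCM-duality arguments are correct but unnecessary, since hypothesis (4) only demands $\depth C_d^\vee\ge 1$, which a free $C_d$ already satisfies.
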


\begin{proof} Again we may assume that $1\le d$. Apply Theorem~\ref{SJD-cor1} with $i=t=d$ and $C_{\bullet}$ a free resolution of $M$.
\end{proof}

\smallskip
Corollary~\ref{SJD-cor2} is the numerical consequence of Theorem~\ref{SJD-cor1} that we apply most often.
\begin{corollary}\label{SJD-cor2} Adopt the setup of {\/\rm\ref{SU-SJD}} and assume in addition that $R=k[x_1,\dots,x_d]$ be a standard graded polynomial ring over a field.
Assume that $\dim \HH_j(C_\bullet)\le j$ whenever $1\le j\le d-1$ and that $\min\{d,j+2\}\le \depth C_j$ whenever $0\le j\le d-1$. Then  $$b_0(\HH_{\mathfrak m}^0(M))\le \maxgendeg (C_{d-1})-d+1.$$
\end{corollary}
\begin{proof} Apply Theorem~\ref{SJD-cor1} with $i=d$ and $t=1$. \end{proof}

The next result is analogous to Corollary~\ref{cor-to-localCoh}. The hypothesis is weaker than the hypothesis in Corollary~\ref{cor-to-localCoh} because we do not require that the complex $C_{\bullet}$ be linear quite as far in the present result. Alas, the conclusion is also weaker. We conclude that the generators of $\HH^0_\mathfrak m(M)$ are concentrated in one degree rather than learning that all of  $\HH^0_\mathfrak m(M)$ is concentrated in one degree.

\begin{corollary}\label{SJD-Apr8} Let $R=k[x_1,\dots,x_d]$ be a standard graded polynomial ring over a field, with maximal homogeneous ideal $\mathfrak m$,  
$$C_{\bullet}:\quad \ \ldots \ \longrightarrow C_2 \longrightarrow C_1 \longrightarrow  C_0\longrightarrow 0$$ be a homogeneous complex of finitely generated graded free $R$-modules, and $M=\HH_0(C_{\bullet})$. Assume that 
$\dim \HH_j(C_\bullet)\le j\, $ for all $\/1\le j\le d-1$  and that 
the subcomplex 
$$C_{d-1}\longrightarrow \ \ldots \ \longrightarrow C_0\longrightarrow 0$$ of $C_\bullet$ is $q$-linear for some integer $q$. Then every minimal homogeneous generator of $\HH^0_\mathfrak m(M)$ has degree $q$.
\end{corollary}
\begin{proof} We may assume that $1\le d$ since otherwise $\HH^0_\mathfrak m(M)=M$. Apply Corollary~\ref{SJD-cor2}  to conclude
$$b_0(\HH_{\mathfrak m}^0(M))\le \maxgendeg (C_{d-1})-d+1=q.$$
On the other hand, $\HH_{\mathfrak m}^0(M)$ is a submodule of $M$  and every minimal homogeneous generator of $M$ has degree $q$. \end{proof}

\smallskip

\section{Geometric Applications.} 
\label{dim-one}

We apply the local cohomology techniques of Section~\ref{TMAS} to draw conclusions about the generator degrees of the second  symbolic power of the prime ideal which defines a monomial curve in affine space;  of the 
second  symbolic power of the  ideal which defines a finite set of points  in projective space; and of the saturated ideal defining the intersection of  a projective scheme with a general linear subspace.

Recall that if $I$ is an ideal  in a Noetherian ring $R$, then the $t$-th symbolic power of $I$ is $I^tR_W\cap R$, where $W$ is the
complement of the union of the associated primes of $I$ and $R_W$ is the localization of $R$ at the multiplicative
system $W$; see \cite{HH}. 
 The first two applications in this section make use of local cohomology by way of the following lemma.

\begin{lemma}\label{doo} Let $P$ be a non-negatively graded Noetherian   ring with $P_0$ an Artinian local ring. 
Let  $I$ be a homogeneous ideal in $P$ 
with $P/I$ a Cohen-Macaulay ring of dimension one. Write $R=P/I$ and denote the maximal homogeneous ideal of $R$ by $\mathfrak m$.
Then $$\maxgendeg (I^{(2)})\le \sup\{\maxgendeg(I)+b_0(\mathfrak m)+a(R), 2\mgd(I)\}.$$ 
\end{lemma}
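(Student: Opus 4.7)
The plan is to split the bound via the short exact sequence $0 \to I^2 \to I^{(2)} \to I^{(2)}/I^2 \to 0$, which gives $b_0(I^{(2)}) \le \max\{b_0(I^2),\, b_0(I^{(2)}/I^2)\}$. The inequality $b_0(I^2) \le 2 b_0(I)$ is immediate, so the heart of the proof is the estimate $b_0(I^{(2)}/I^2) \le b_0(I) + b_0(\mathfrak m) + a(R)$. I will obtain this by identifying $I^{(2)}/I^2$ with a local cohomology module and then applying Corollary~\ref{SJD-cor1}.

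For the identification I show $I^{(2)}/I^2 \cong \HH^0_\mathfrak m(I/I^2)$ as graded $R$-modules. Since $R$ is Cohen-Macaulay of dimension one, the associated primes of $I$ coincide with the minimal primes $\mathfrak p_1, \ldots, \mathfrak p_r$, each a homogeneous prime strictly contained in the graded maximal ideal $\mathfrak m_P$ of $P$. Because $P_0$ is local and $\dim P/\mathfrak p_i = 1$, the only graded prime of $P$ properly containing any $\mathfrak p_i$ is $\mathfrak m_P$ itself; hence for any graded $P$-module supported in $V(I)$, its $W$-torsion coincides with its $\mathfrak m_P$-torsion. Applied to $P/I^2$ this yields $I^{(2)}/I^2 = \HH^0_{\mathfrak m_P}(P/I^2)$. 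The short exact sequence $0 \to I/I^2 \to P/I^2 \to R \to 0$ combined with the vanishing $\HH^0_\mathfrak m(R) = 0$ then produces the desired isomorphism.

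I next apply Corollary~\ref{SJD-cor1} to $M = I/I^2$ with $d = 1$ and $i = t = 1$. Take the two-term complex $C_\bullet\colon C_1 \to C_0 \to 0$ of finitely generated graded free $R$-modules where $C_0 = \bigoplus_j R(-d_j)$ comes from a minimal homogeneous generating set of $I$ (so $b_0(C_0) = b_0(I)$) and $C_1$ is any free $R$-module surjecting onto $\ker(C_0 \twoheadrightarrow I/I^2)$, so that $\HH_0(C_\bullet) = I/I^2$. The hypotheses of Proposition~\ref{SJD-P} then hold: condition (\ref{SJD-P.2}) is vacuous since the index range $1 \le j \le 0$ is empty; conditions (\ref{SJD-P.3}) and (\ref{SJD-P.4}) reduce to $\depth C_0 \ge 1$ and $\depth C_1^\vee \ge 1$, both automatic because $C_0$ and $C_1^\vee \cong \omega_R^n$ are maximal Cohen-Macaulay $R$-modules; and condition (\ref{SJD-P.1}) asks for $\depth(M/\HH^0_\mathfrak m(M)) \ge 1$, which is automatic in a Cohen-Macaulay ring of dimension one, since such a module either vanishes or has no finite-length submodule. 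Since $\Theta_1 = b_0(\mathfrak m)$, the corollary delivers $b_0(\HH^0_\mathfrak m(I/I^2)) \le b_0(I) + b_0(\mathfrak m) + a(R)$, which combined with $b_0(I^2) \le 2 b_0(I)$ finishes the proof.

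The main obstacle will be the identification $I^{(2)}/I^2 \cong \HH^0_\mathfrak m(I/I^2)$ in the second paragraph, which crucially depends on the one-dimensional Cohen-Macaulay hypothesis to rule out embedded primes and graded primes strictly between the $\mathfrak p_i$ and $\mathfrak m_P$. Once that is in place, the rest is a direct application of Corollary~\ref{SJD-cor1} to a short truncated free complex over a one-dimensional Cohen-Macaulay base.
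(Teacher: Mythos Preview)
Your proof is correct and follows essentially the same strategy as the paper's: identify $I^{(2)}/I^2$ with $\HH^0_{\mathfrak m}(I/I^2)$, bound the latter via the local cohomology machinery of Section~\ref{TMAS}, and combine with $b_0(I^2)\le 2b_0(I)$. The paper invokes Corollary~\ref{SJD-cor3} (which is Corollary~\ref{SJD-cor1} with $i=t=d$ applied to a free resolution) and therefore must treat the case $M/\HH^0_{\mathfrak m}(M)=0$ separately because of the depth-zero hypothesis in~\ref{SJD-cor3}; your direct appeal to Corollary~\ref{SJD-cor1} with a two-term free complex sidesteps that case split, and you also supply an argument for the identification $I^{(2)}/I^2=\HH^0_{\mathfrak m}(I/I^2)$, which the paper simply asserts.
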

\begin{proof}  Let $M$ be the $R$-module $I/I^2$. Notice that $\HH^0_\mathfrak m(M)=I^{(2)}/I^2$. If $M/\HH^0_\mathfrak m(M)$ is the zero module, then $I^{(2)}=I$
and the degree bounds hold automatically. Otherwise,   $M/\HH^0_\mathfrak m(M)$ has positive depth and   is a maximal Cohen-Macaulay $R$-module. 
Apply Corollary~\ref{SJD-cor3} to the $R$-module $M$ to conclude
that \begin{equation}\label{n-m-y}\maxgendeg (\HH^0_\mathfrak m(I/ I^2))\le \maxgendeg (I/I^2)+\Theta_1+a(R).\end{equation} Nakayama's Lemma guarantees that $\maxgendeg (I/I^2)=\maxgendeg (I)$.
In the present situation, $\Theta_1$, which is defined in (\ref{Theta-sub-t}),
is equal to $b_0(\mathfrak m)$. Thus,
\begin{align}\notag\maxgendeg (I^{(2)})\le \sup \left\{
\maxgendeg \left({I^{(2)}}/{I^2}\right),\maxgendeg(I^2) \right\}
&=\sup \left\{\maxgendeg \left(\HH^0_\mathfrak m(I/I^2)\right),\maxgendeg(I^2) \right\}\\&\le \sup \{
\maxgendeg (I)+b_0(\mathfrak m) +a(R),2\maxgendeg(I) \}.
\notag\end{align}\end{proof}

Our first application of Lemma~\ref{doo} is to monomial curves. The hypothesis in Corollary~\ref{monomialCurve} that $H$ is a numerical semigroup includes the requirement that all large positive integers are in $H$. The Frobenius number of $H$, denoted $F(H)$, is the largest integer $b$ with $b\notin H$. One way to see the connection between $F(H)$ and the language of Lemma~\ref{doo} is described below.

Let  $R$ be  a
non-negatively graded ring over a field. Denote    the maximal homogeneous  ideal of $R$ by $\mathfrak m$ and the dimension of $R$ by $d$.
A well known theorem of Serre, see for example \cite[4.3.5]{BH}, shows   that
\begin{align}
 &\max\{n \, | \,  \text{Hilbert Function}_R(n)\neq \text{Hilbert Quasi-Polynomial}_R(n)\}\notag \\= \ \, &\max\left\{n \, \left| \ \sum\limits_{i=0}^d(-1)^i\dim \HH^i_\mathfrak m(R)_n\neq 0\right.\right\}.\label{PostNum}\end{align}
If   $R$ is Cohen-Macaulay, then the
number on the right side of (\ref{PostNum}) is equal to the $a$-invariant of $R$. If $R\subset k[t]$ is the coordinate ring of a monomial curve over an infinite field, then  the number on the left side of (\ref{PostNum}) represents the largest exponent $n$ with $t^n\notin R$. If $R$ is a standard graded ring, then the number on the left is often called the postulation number of $R$.

\begin{corollary}\label{monomialCurve} 
Let $k$ be an infinite field, $H$ be a numerical semigroup minimally generated by the positive integers $h_1<h_2<\cdots <h_\ell$,
 $P$ be the polynomial ring $k[x_1,\dots,x_\ell]$, and $\mathfrak p\subset P$ be the prime ideal which defines the monomial curve $$\{(\tau^{h_1},\dots,\tau^{h_\ell})\subset \mathbb A_k^\ell\mid \tau\in k\}.$$ Then the maximal generator degree of the second symbolic power of $\, \mathfrak p$ satisfies
$$\maxgendeg (\mathfrak p^{(2)})\le \sup\{\maxgendeg (\mathfrak p)+\text{ the maximal generator of $H$} +\text{ the Frobenius number of $H$}, 2\mgd (\mathfrak p)\}.$$
\end{corollary}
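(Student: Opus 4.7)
The plan is to apply Lemma~\ref{doo} directly, after choosing an appropriate grading on $P$. Grade $P=k[x_1,\dots,x_\ell]$ by $\deg x_i = h_i$. With this grading $P$ is a non-negatively graded Noetherian ring with $P_0 = k$ an Artinian local ring, and the kernel $\mathfrak p$ of the $k$-algebra homomorphism $x_i \mapsto t^{h_i}$ is homogeneous. The quotient $R = P/\mathfrak p$ is identified with the graded subring $k[t^{h_1},\dots,t^{h_\ell}]$ of $k[t]$ (where $\deg t = 1$); it is a one-dimensional Cohen--Macaulay graded domain. Thus the hypotheses of Lemma~\ref{doo} are satisfied and it yields
\[
b_0(\mathfrak p^{(2)}) \le \max\bigl\{b_0(\mathfrak p) + b_0(\mathfrak m) + a(R),\ 2 b_0(\mathfrak p)\bigr\},
\]
where $\mathfrak m$ is the maximal homogeneous ideal of $R$.

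It remains to identify $b_0(\mathfrak m)$ and $a(R)$ with the quantities in the stated bound. The ideal $\mathfrak m$ has minimal homogeneous generators $t^{h_1},\dots,t^{h_\ell}$ of degrees $h_1<\cdots<h_\ell$; minimality follows from the fact that $h_1,\dots,h_\ell$ is the \emph{minimal} generating set of the numerical semigroup $H$ (otherwise some $t^{h_i}$ would be a polynomial in the remaining ones, contradicting minimality in $H$). Hence $b_0(\mathfrak m) = h_\ell$, the maximal generator of $H$.

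For the $a$-invariant, I would invoke the discussion preceding the corollary. Since $R$ is one-dimensional Cohen--Macaulay, the formula in (\ref{PostNum}) identifies $a(R)$ with the postulation number of $R$, which for a monomial curve equals the largest integer $n$ with $t^n \notin R$. Since $R_n = k \cdot t^n$ if $n\in H$ and $R_n = 0$ otherwise, this largest $n$ is precisely the Frobenius number $F(H)$ of $H$. Substituting $b_0(\mathfrak m) = h_\ell$ and $a(R) = F(H)$ into the inequality above yields the corollary.

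The main obstacle is really only bookkeeping: verifying that the non-standard grading on $P$ is compatible with the homogeneity of $\mathfrak p$ and that the minimal generators of $H$ translate cleanly into minimal generators of $\mathfrak m$. The substantive content is provided by Lemma~\ref{doo}, which in turn is the easy Corollary~\ref{SJD-cor3} of the main machinery; no further work from Section~\ref{TMAS} is needed.
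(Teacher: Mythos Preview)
Your proof is correct and follows essentially the same route as the paper: grade $P$ by $\deg x_i=h_i$, apply Lemma~\ref{doo} to the one-dimensional Cohen--Macaulay domain $R=P/\mathfrak p$, and identify $b_0(\mathfrak m)=h_\ell$ and $a(R)=F(H)$. The paper's argument is slightly terser but otherwise identical.
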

\begin{proof} 
 View $P$ as a graded ring with $\deg x_i=h_i$.  Then $\mathfrak p$ is the kernel of the homogeneous ring homomorphism $P\to k[t]$ with $x_i\mapsto t^{h_i}$. 
Let $R=P/\mathfrak p$ and $\mathfrak m$ denote the maximal homogeneous ideal of $R$.
 We see that  $R$ is a one-dimensional Cohen-Macaulay domain,  the $a$-invariant of $R$ is the Frobenius number of $H$, and $\mgd(\mathfrak m)=h_\ell$ is the maximal generator of $H$. The assertion follows from Lemma~\ref{doo}.
\end{proof}

\begin{example}In the language of Corollary~\ref{monomialCurve}, if $H={<}3,4,5{>}$, then $$\maxgendeg (\mathfrak p)+b_0(\mathfrak m)+F(H)=10+5+2=17,\quad 2\maxgendeg (\mathfrak p)=20,
\quad \text{and} \quad \maxgendeg(\mathfrak p^{(2)})=18;$$so there are situations where some minimal generator of $\mathfrak p^2$ of degree more than $$\maxgendeg (\mathfrak p)+b_0(\mathfrak m)+F(H)$$ is also a minimal generator of $\mathfrak p^{(2)}$. 
The calculation of $\maxgendeg(\mathfrak p^{(2)})$ was made in Macaulay2 \cite{M2} over the field of rational numbers.  \end{example}

\smallskip
 Our second application of Lemma~\ref{doo} concerns the ideal of a finite set of points in projective space and to other similar ideals. In the situation of Corollary~\ref{typed-by-Claudia},
$$\reg(R)=a(R)+1=\text{ the postulation number of $P/I$ plus one};$$
see the discussion surrounding (\ref{PostNum}).

\begin{corollary}\label{typed-by-Claudia} Let $P$ be a standard graded polynomial ring over a field 
and  $I$ be a homogeneous ideal in $P$ 
with $R=P/I$ Cohen-Macaulay of dimension one. Then 
\[b_0(I^{(2)})\le b_0(I)+\reg(R) +1.\]
Furthermore, if, in addition to the above hypotheses, the minimal homogeneous resolution of $I$ by free $P$-modules 
is not  linear, then 
\begin{equation}\label{8.4.1}b_0(I^{(2)})\le b_0(I)+\reg(R).\end{equation}
\end{corollary}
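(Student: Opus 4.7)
The plan is to derive Corollary~\ref{typed-by-Claudia} directly from Lemma~\ref{doo}, which already does most of the work. Since $P$ is standard graded, so is $R=P/I$, and hence $b_0(\mathfrak m)=1$. Because $R$ is Cohen--Macaulay of dimension one, $\reg(R)=a(R)+1$. Substituting these identifications into the conclusion of Lemma~\ref{doo} gives
\[ b_0(I^{(2)}) \le \max\{b_0(I)+\reg(R),\; 2b_0(I)\}. \]
Thus everything reduces to comparing $2b_0(I)$ with $b_0(I)+\reg(R)+1$ in general, and with $b_0(I)+\reg(R)$ under the non-linearity hypothesis.

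For the first bound, I would invoke the elementary facts $b_0(I)\le \reg(I)$ (immediate from the definition $\reg(I)=\max_i\{b_i(I)-i\}$) and $\reg(I)\le \reg(R)+1$. The latter follows either from the short exact sequence $0\to I\to P\to R\to 0$ together with $\reg(P)=0$, or from the identity $b_i(I)=b_{i+1}(R)$ obtained by shifting the minimal free resolution of $R$. Therefore $b_0(I)\le \reg(R)+1$, so $2b_0(I)\le b_0(I)+\reg(R)+1$, which collapses the displayed maximum and yields the first bound.

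For the sharper bound, the key observation is that the minimal free resolution of $I$ over $P$ is linear precisely when $b_i(I)=b_0(I)+i$ for all $i$, equivalently when $\reg(I)=b_0(I)$. Under the non-linearity hypothesis one therefore has $\reg(I)\ge b_0(I)+1$, and combining this with $\reg(I)\le \reg(R)+1$ yields $b_0(I)\le \reg(R)$. Hence $2b_0(I)\le b_0(I)+\reg(R)$, and the maximum simplifies to $b_0(I)+\reg(R)$, as claimed. There is no real obstacle here: Lemma~\ref{doo} carries all the cohomological content, and what remains is a translation between the $a$-invariant and Castelnuovo--Mumford regularity, together with the observation that non-linearity of the resolution of $I$ is exactly the condition that saves one degree in the bound.
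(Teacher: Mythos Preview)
Your proposal is correct and follows essentially the same route as the paper: apply Lemma~\ref{doo}, use $b_0(\mathfrak m)=1$ and $a(R)+1=\reg(R)$ to rewrite the bound as $b_0(I)+\max\{\reg(R),b_0(I)\}$, then bound $b_0(I)$ by $\reg(R)+1$ in general and by $\reg(R)$ in the non-linear case via $b_0(I)\le\reg(I)=\reg(R)+1$. The only cosmetic difference is that the paper uses the equality $\reg(I)=\reg(R)+1$ (which holds since $b_i(I)=b_{i+1}(R)$), whereas you state only the inequality; either suffices.
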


\begin{proof} The ring $R$ is a standard graded  ring over a field; so every minimal generator of the maximal homogeneous ideal $\mathfrak m$ of $R$ has degree one; furthermore $1+a(R)=\reg(R)$. Apply Lemma~\ref{doo} to obtain
\begin{align}\maxgendeg (I^{(2)})\le \sup\{\maxgendeg(I)+b_0(\mathfrak m)+a(R), 2\mgd(I)\}&\notag \le \sup\{\maxgendeg(I)+\reg(R), 2\mgd(I)\}
\\&=
\maxgendeg(I)+\max\{\reg(R),\mgd(I)\}
.\notag\end{align}
In the general case, 
$$\maxgendeg (I)\le \operatorname{reg}(I) \leq \operatorname{reg}(R)+1.$$On the other hand, if 
the minimal homogeneous resolution of $I$ by free $P$-modules 
is not  linear, then $$b_0(I)<\reg(I) \leq \operatorname{reg}(R)+1;$$ hence $b_0(I)\le \reg(R)$.
\end{proof}
\begin{example}  If $P=\mathbb Q[x,y,z]$ and $I$ is the ideal of $P$ generated by the $2\times 2$ minors of 
$$\bmatrix x&y&z\\y&z&x\endbmatrix,$$ then $R=P/I$ is a one dimensional Cohen-Macaulay ring, the minimal homogeneous resolution of $I$ by free $P$-modules is linear,
$$b_0(I)+\reg(R)=2+1=3, \quad\text{and}\quad b_0(I^{(2)})=4;$$ so the inequality (\ref{8.4.1}) does not hold in the general case.
Again, $b_0(I^{(2)})$ was computed using Macaulay2 \cite{M2}. 
\end{example}

\begin{remark}If one re-does the calculation of Corollary~\ref{typed-by-Claudia} starting at (\ref{n-m-y}), then one can  
read the conclusion of Corollary~\ref{typed-by-Claudia}
as \begin{equation}\label{formulation}b_0(I^{(2)}/I^2)\le b_0(I)+\reg(R).\end{equation}
Indeed, 
$$b_0(I^{(2)}/I^2)=\maxgendeg (\HH^0_\mathfrak m(I/ I^2))\le \maxgendeg (I/I^2)+\Theta_1+a(R)
=b_0(I)+1+a(R)
=b_0(I)+\reg(R).$$
The formulation (\ref{formulation}) affords a direct  comparison with the relevant part of \cite[Cor~7.8]{EHU}:
$$\topdeg(I^{(2)}/I^2)\le b_1(I)-1+\reg(R).$$ 
Observe that $b_0(I^{(2)}/I^2)\le \topdeg(I^{(2)}/I^2)$ and $b_0(I)\le b_1(I)-1$. (Recall the meaning of $b_i$ from (\ref{numerical-functions}).)
\end{remark}

\medskip

Corollary~\ref{hyperplane} is about hyperplane sections of subschemes of projective space. For instance, let $V$ be the subscheme of $\mathbb P^{d-1}_k$ defined by the homogeneous ideal $I$ in $R=k[x_1,\dots,x_d]$ and $H$ be a linear subspace of $\mathbb P^{d-1}_k$ defined by  general linear forms in $k[x_1,\dots,x_d]$. We produce an upper bound for the maximal generator degree of the saturated ideal defining  $V\cap H$, in terms of information that can be read from a single shift in the minimal homogeneous resolution of $R/I$. 
The analogous bound for the highest degree of a form that is in the saturated ideal of $V\cap H$ but not in the image of $I$ was proved in \cite[5.1]{EHU}.
Notice that the saturated ideal of 
$V\cap H$ is the ideal of polynomials vanishing on $V\cap H$ if $k$ is algebraically closed and $I$ is radical \cite[5.2]{Fl}.
\begin{corollary}\label{hyperplane}Let $R=k[x_1,\dots,x_d]$ be a standard graded polynomial ring over a field $k$, $I$ be a homogeneous ideal of $R$, 
and $L$ be an ideal minimally generated by $c$ linear forms in $R$. Assume that $\dim \, {\rm Tor}^R_1(R/I, R/L)\le 1$.
Let $\bar I$ be the image of $I$ in $\bar R=R/L$ and $J$ be the saturation $J=\bar I^{\rm sat}$ of $\bar I$. Then $$b_0(J)\le \max\{b_0(I), b_{d-c-2}(I)-d+c+1\}.$$ 
Furthermore, if $c=\dim (R/I)-1$, then $$b_0(J)\le  b_{d-c-2}(I)-d+c+2.$$ \end{corollary}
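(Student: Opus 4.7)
The plan is to base-change a minimal homogeneous free resolution $F_\bullet$ of $R/I$ along $R\twoheadrightarrow \bar R$. Put $C_\bullet=F_\bullet\otimes_R\bar R$, a complex of finitely generated graded free $\bar R$-modules with $\HH_0(C_\bullet)=\bar R/\bar I$ and $\maxgendeg(C_j)=b_j(R/I)=b_{j-1}(I)$ for $j\geq 1$. Because any $c$ general linear forms in $R$ form an $R$-regular sequence, the Koszul complex on $\ell_1,\dots,\ell_c$ is a free resolution of $\bar R$ over $R$, so $\HH_j(C_\bullet)\cong \operatorname{Tor}_j^R(R/I,\bar R)$. Prime avoidance over the infinite field $k$ ensures that $\ell_1,\dots,\ell_c$ is a filter-regular sequence on $R/I$, and a standard induction on $c$ using the mapping-cone structure of the Koszul complex then shows that each positive Koszul homology is $\bar{\mathfrak m}$-supported; hence $\dim \HH_j(C_\bullet)\leq 0$ for $j\geq 1$.

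For the first bound, I would apply Corollary~\ref{SJD-cor2} over the standard graded polynomial ring $\bar R$ of dimension $d-c$. The hypotheses are immediate because the $C_j$ are free $\bar R$-modules and $\dim \HH_j(C_\bullet)\leq 0\leq j$ for $j\geq 1$. The conclusion is
$$b_0\bigl(\HH^0_{\bar{\mathfrak m}}(\bar R/\bar I)\bigr)\leq \maxgendeg(C_{d-c-1})-(d-c)+1 = b_{d-c-2}(I)-d+c+1.$$
Since $J/\bar I=\HH^0_{\bar{\mathfrak m}}(\bar R/\bar I)$, the short exact sequence $0\to\bar I\to J\to J/\bar I\to 0$ yields $b_0(J)\leq \max\{b_0(\bar I),\,b_0(J/\bar I)\}$. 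Combining this with $b_0(\bar I)\leq b_0(I)$ (any generating set of $I$ descends to a generating set of $\bar I$) gives the first claim.

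For the second bound, the hypothesis $c=\dim(R/I)-1$ forces $\dim \bar R/\bar I\leq 1$ by genericity. We may assume $J\neq\bar R$, in which case $\bar R/J$ is one-dimensional Cohen-Macaulay (depth $\geq 1$ by saturation, dimension $\leq 1$ inherited from $\bar R/\bar I$). Applying $\HH^\bullet_{\bar{\mathfrak m}}$ to $0\to J/\bar I\to \bar R/\bar I\to \bar R/J\to 0$, in which $J/\bar I$ has finite length, produces an isomorphism $\HH^1_{\bar{\mathfrak m}}(\bar R/\bar I)\cong \HH^1_{\bar{\mathfrak m}}(\bar R/J)$, so $a(\bar R/J)=a_1(\bar R/\bar I)$. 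Using $\reg(J)=\reg(\bar R/J)+1$ (standard for a nontrivial ideal in a polynomial ring) together with $\reg(\bar R/J)=a(\bar R/J)+1$ (since $\bar R/J$ is one-dimensional Cohen-Macaulay), one obtains
$$b_0(J)\leq \reg(J) = a_1(\bar R/\bar I)+2.$$
Finally, Lemma~\ref{localCoh}.(\ref{localCoh-2}) applied to $C_\bullet$ over $\bar R$ with $i=1$ (whose hypotheses again follow from the filter-regular set-up and the freeness of the $C_j$) yields $a_1(\bar R/\bar I)\leq \maxgendeg(C_{d-c-1})+a(\bar R)=b_{d-c-2}(I)-(d-c)$, and combining the two inequalities gives the desired bound $b_0(J)\leq b_{d-c-2}(I)-d+c+2$.

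The main obstacle I anticipate is the verification that $\dim \HH_j(C_\bullet)\leq 0$ for $j\geq 1$: it requires the filter-regular property of general linear forms (via prime avoidance applied to the associated primes of the successive quotients of $R/I$) together with a length-$c$ induction on Koszul homology. Once this dimension estimate is in hand, both parts reduce cleanly to direct applications of the machinery of Section~\ref{TMAS}, with the short exact sequence $0\to J/\bar I\to \bar R/\bar I\to \bar R/J\to 0$ as the bridge from $H^0$- to $H^1$-information in the second part.
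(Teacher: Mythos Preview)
Your proof is correct. The first part is essentially identical to the paper's argument: both tensor a minimal free resolution of $R/I$ with $\bar R$, observe that the positive Tor's have finite length (the paper cites \cite{Xie} and \cite{NU}, while you sketch the filter-regular induction), and apply Corollary~\ref{SJD-cor2} over $\bar R$.

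For the second bound the two proofs diverge. The paper does not touch local cohomology again; instead it observes that when $c=\dim(R/I)-1$ one has $d-c-1=\grade I$, so the dual $0\to C_0^\vee\to\cdots\to C_{d-c-1}^\vee$ of the truncated minimal resolution is itself a minimal acyclic complex, forcing the strict inequalities $b_0(I)<b_1(I)<\cdots<b_{d-c-2}(I)$ and hence $b_0(I)\le b_{d-c-2}(I)-d+c+2$. Feeding this into the already-established first bound finishes the argument. Your route is cohomological: you pass through $b_0(J)\le\reg(J)=a_1(\bar R/\bar I)+2$ via the one-dimensional Cohen--Macaulay ring $\bar R/J$, and then bound $a_1(\bar R/\bar I)$ using Lemma~\ref{localCoh} with $i=1$. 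The paper's approach is more elementary and self-contained (it uses only Betti-number monotonicity), while yours stays closer to the local-cohomology framework of Section~\ref{TMAS} and gives an independent confirmation that the bound is sharp enough without invoking the dual-resolution trick.
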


  \smallskip
  
\begin{remark} The inequality $\dim \, {\rm Tor}^R_1(R/I, R/L)\le 1$ is satisfied if $\dim (R/I)\le 1$. It also holds if $k$ is infinite and $L$ is generated by general linear forms, because such forms are a filtered regular sequence on $R/I$, see \cite[2.3]{Xie} for a proof. 
\end{remark}

\begin{proof}If $d-1\le c$, then $\bar R$ is a principal ideal domain and hence $J=\bar I$. Thus we may assume that $c\le d-2$. Denote the maximal homogeneous ideal of $R$ by $\mathfrak m$ and the maximal homogeneous ideal of $\bar R$ by $\bar {\mathfrak m}$. The ideal $J$ of $\bar R$ 
is equal to $$J=\bigcup_i(\bar I:_{\bar R}\bar{\mathfrak m}^i);$$
 therefore $$J/\bar I= \HH^0_{\bar{\mathfrak m}} (\bar R/\bar I)$$ and
$$b_0(J)\le \sup\{b_0(\bar I),b_0(\HH^0_{\bar{\mathfrak m}} (\bar R/\bar I))\}
\le \sup\{b_0(I),b_0(\HH^0_{\bar{\mathfrak m}} (\bar R/\bar I))\}.$$

We now bound $b_0(\HH^0_{\bar{\mathfrak m}} (\bar R/\bar I)$.
 Let $C_\bullet$ be a minimal homogeneous resolution  of $R/I$ by free $R$-modules. Consider the complex $\bar C_\bullet=C_\bullet\otimes_R \bar R$. 
 Our assumption on $\operatorname{Tor}_1$ and the rigidity of $\operatorname{Tor}$ \cite[2.1]{A} imply that $\dim \, {\rm Tor}^R_i(R/I, R/L)\le 1$ for all positive $i$.
 Keep in mind that $\bar R$ is a polynomial ring of dimension $d-c$. Apply Corollary~\ref{SJD-cor2} to the complex $\bar C_\bullet$ to obtain 
\begin{align}b_0(\HH^0_{\bar{\mathfrak m}} (\bar R/\bar I))&\le b_0(C_{d-c-1})-(d-c)+1
=b_{d-c-1}(R/I)-d+c+1\notag\\&=b_{d-c-2}(I)-d+c+1.\notag
\end{align}
This completes the proof of the general case.

If $c=\dim (R/I)-1$, then $d-c-1=\htt\, I=\grade I$ and $$b_0(R/I)<b_1(R/I)<\dots < b_{d-c-1}(R/I)$$ because 
$$0\lto C_0^*\lto \/ \ldots \/ \lto C_{d-c-1}^*$$ is a minimal resolution. It follows that
$$b_0(I)<\dots<b_{d-c-2}(I);$$ and therefore, $b_0(I)\le b_{d-c-2}(I)-d+c+2$.
\end{proof}

\bigskip

\end{document}